\def\vs{\vspace{0.1cm}}
\theoremstyle{plain}
\newtheorem{thm}{Theorem}
\newtheorem{cor}[thm]{Corollary}
\newtheorem{pro}[thm]{Proposition}
\newtheorem{lem}[thm]{Lemma}
\newtheorem{proposition-principale}[thm]{Proposition principale}
\newtheorem{thm-principal}{Th\'eor\`eme principal}[section]
\theoremstyle{definition}
\newtheorem{eg}[thm]{Example}
\newtheorem{rem}[thm]{Remark}
\newenvironment{thm-A}
{{\vs \noindent \bf Theorem A.$\,$}\it}{\vs}
\newenvironment{thm-B}
{{\vs \noindent \bf Theorem B.$\,$}\it}{\vs}
\newenvironment{thm-BB}
{{\vs \noindent \bf Theorem B'.$\,$}\it}{\vs}
\def\C{\mathbf{C}}
\def\Z{\mathbf{Z}}
\def\bfk{{\mathbf{k}}}
\def\bfF{{\mathbf{F}}}
\newcommand{\bbA}{{\mathbb{A}}}
\newcommand{\Spec}{{\mathrm{Spec}}}
\newcommand{\sT}{{\mathcal T}}
\newcommand{\id}{{\rm id}}
\def\P{\mathbb{P}}
\def\Aut{{\sf{Aut}}}
\def\Bir{{\sf{Bir}}}
\def\PGL{{\sf{PGL}}}
\def\GL{{\sf{GL}}}
\def\Pic{{\mathrm{Pic}}}
\def\Ind{{\text{Ind}}}
\def\Exc{{\text{Exc}}}
\begin{document}

\setlength{\baselineskip}{0.53cm}        
%
%
\title[]
{Birational conjugacies between endomorphisms on the projective plane}
\date{2019}

\author{Serge Cantat}
\address{Serge Cantat, IRMAR, Campus de Beaulieu,
b\^atiments 22-23
263 avenue du G\'en\'eral Leclerc, CS 74205
35042  RENNES C\'edex}
\email{serge.cantat@univ-rennes1.fr}

\author{Junyi Xie}
\address{Junyi Xie, IRMAR, Campus de Beaulieu,
b\^atiments 22-23
263 avenue du G\'en\'eral Leclerc, CS 74205
35042  RENNES C\'edex}
\email{junyi.xie@univ-rennes1.fr}

\thanks{The last-named author is partially supported by project ``Fatou'' ANR-17-CE40-0002-01, the first-named author by the french academy of sciences (fondation del Duca). }

%
%

\maketitle
 

%
%
%
%




{\noindent}{\bf{1. The statement. --}} Let $\bfk$ be an algebraically closed field of characteristic $0$. If $f_1$ and 
$f_2$ are two endomorphisms of a projective surface $X$ over $\bfk$ and  $f_1$ is conjugate to $f_2$
by a birational transformation of $X$, then $f_1$ and $f_2$ have the same topological degree. 
When $X$ is the projective plane $\P^2_\bfk$, $f_1$ (resp. $f_2$) is given by homogeneous formulas of the same degree $d$ without common 
factor, and $d$ is called the degree, or algebraic degree of $f_1$; in that case the topological degree is $d^2$, so, $f_1$ and $f_2$ have the same degree $d$ if they are conjugate. 

\smallskip

\begin{thm-A} 
Let $\bfk$ be an algebraically closed field of characteristic $0$.  Let $f_1$ and $f_2$ be dominant endomorphisms of $\P^2_\bfk$. 
Let $h:\P^2_\bfk\dashrightarrow \P^2_\bfk$ be a birational map such that $h\circ f_1=f_2\circ h$.
If the degree $d$ of $f_1$ is $\geq 2$, there exists an automorphism $h': \P^2_\bfk\to \P^2_\bfk$ such that $h'\circ f_1=f_2\circ h'$. 

Moreover, 
$h$ itself is in $\Aut(\P^2_\bfk)$, except maybe if $f_1$ is conjugate by an element of $\Aut(\P^2_\bfk)$  to

\begin{enumerate}
\item  the composition of $g_d: [x:y:z]\mapsto [x^d:y^d:z^d]$ and a permutation of the coordinates,

\item or the endomorphism $ (x,y) \mapsto (x^d, y^d +\sum_{j=2}^d a_j y^{d-j})$ of the open subset 
$\bbA^1_\bfk\setminus \{0\}\times \bbA^1_\bfk\subset \P^2_\bfk$, for some coefficients $a_j\in \bfk$.
\end{enumerate}
\end{thm-A}

\smallskip

Theorem~A is proved in Sections~2 to 6. 
In Section~7 we describe centralizers of endomorphisms of $\P^2_\bfk$ of degree $\geq2$. 
A counter-example to Theorem~A is given in Section~8 when ${\mathrm{char}}(\bfk)\neq 0$.
The case $d=1$ is covered by~\cite{Blanc2006}; in particular, there are automorphisms $f_1,f_2\in \Aut(\P^2_{\bfk})$ which are conjugate by some birational transformation but not by an automorphism.

\begin{eg}
When $f_1=f_2$ is the composition of $g_d$ and a permutation of the coordinates and $h$ is the Cremona involution $[x:y:z]\mapsto [x^{-1}:y^{-1}:z^{-1}]$, we have $h\circ f_1=f_2\circ h.$
\end{eg}

This example illustrates what may happen in Case (1) of Theorem~A.  Case~(2) is treated in details in Section~5: Lemma~\ref{lem-mixed} describes more precisely what may happen in this case. 

The exceptional examples from Cases~(1) and (2) preserve pencils of lines: we refer to~\cite{DJ1, DJ2, FP} for a study of rational transformations of the plane preserving a web of curves. 


\smallskip

{\noindent}{\bf{Acknowledgement. --}} Theorem~A answers a question of T. Gauthier and G. Vigny in dimension $2$. We thank them 
for sharing their ideas. We also thank D.-Q. Zhang for answering our questions on the theorem of R. V. Gurjar, and D. Villalobos Paz and J. Moraga and the anonymous referee for pointing out mistakes in the first version of Section~8, and of case (2) of Theorem~A. 

\medskip

{\noindent}{\bf{2. The exceptional locus. --}}
If $h:\P^2\dashrightarrow \P^2$ is a birational map, we denote by $\Ind(h)$ its {\bf{indeterminacy locus}} (a finite subset of 
$\P^2(\bfk)$), and by $\Exc(h)$ its {\bf{exceptional set}}, i.e. the union of the curves contracted by $h$ (a finite union of irreducible curves). 
Let $U_h=\P^2_\bfk\setminus \Exc(h)$ be the complement of $\Exc(h)$; it is a Zariski dense open subset of $\P^2_\bfk$.
If $C\subset \P^2_\bfk$ is a curve, we denote by $h_\circ(C)$ the {\bf{strict transform}} of $C$, i.e. the Zariski closure of $h(C\setminus \Ind(f))$.

\begin{pro}\label{prouhreg} If $h$ is a birational transformation of the projective plane, then 
(1) $\Ind(h)\subseteq \Exc(h)$, (2) $h|_{U_h}(U_h)=U_{h^{-1}}$, and  (3) $h|_{U_h}: U_h\to U_{h^{-1}}$ is an isomorphism. 
\end{pro}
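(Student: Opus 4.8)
The plan is to prove the three statements in the order (1), then (2) and (3) together, deducing the latter two from (1) by a Zariski's Main Theorem argument applied symmetrically to $h$ and $h^{-1}$.

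For (1) I would work with explicit equations rather than with a resolution. Write $h=[P_0:P_1:P_2]$ with $P_i\in\bfk[x_0,x_1,x_2]$ homogeneous of degree $d$ and without common factor, so that $\Ind(h)=\{P_0=P_1=P_2=0\}$, and consider the Jacobian $J=\det(\partial P_i/\partial x_j)$, a form of degree $3(d-1)$ which is not identically zero because $h$ is dominant and $\fieldchar(\bfk)=0$. The first step is Euler's identity $\sum_j x_j\,\partial P_i/\partial x_j=d\,P_i$: evaluating at a point $p\in\Ind(h)$ all the $P_i(p)$ vanish, so the matrix $(\partial P_i/\partial x_j(p))$ kills the nonzero vector $p$, whence $J(p)=0$; thus $\Ind(h)\subseteq\{J=0\}$. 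The second step is to identify $\{J=0\}$, as a set, with $\Exc(h)$. A contracted curve clearly lies in $\{J=0\}$, since $dh$ vanishes along its tangent direction; conversely every irreducible component of $\{J=0\}$ must be contracted, for otherwise $h$ would restrict to a dominant map on such a component $C$ along which $dh$ drops rank, i.e. $h$ would ramify along $C$, forcing the generic degree of $h$ to be at least $2$ and contradicting birationality. Since $\{J=0\}$ is a plane curve it has no isolated points, so each $p\in\Ind(h)\subseteq\{J=0\}$ lies on a contracted curve, giving $\Ind(h)\subseteq\Exc(h)$.

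Granting (1), the open set $U_h$ avoids $\Ind(h)$, so $h|_{U_h}\colon U_h\to\P^2$ is a morphism; by the definition of $U_h$ it contracts no curve, hence has finite fibres. A quasi-finite birational morphism to the normal variety $\P^2$ is an open immersion by Zariski's Main Theorem, so $h|_{U_h}$ is an isomorphism onto an open set $V=h(U_h)$, and likewise $h^{-1}|_{U_{h^{-1}}}$ is an open immersion. To finish (2) and (3) I would check $V=U_{h^{-1}}$: for $x\in U_h$ the open immersion $h|_{U_h}$ is a local isomorphism at $x$ whose local inverse agrees with $h^{-1}$ near $y=h(x)$, so $h^{-1}$ is regular and a local isomorphism at $y$ and in particular contracts no curve through $y$, i.e. $y\in U_{h^{-1}}$; the reverse inclusion follows by the same argument applied to $h^{-1}$. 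The two restrictions are then mutually inverse morphisms between $U_h$ and $U_{h^{-1}}$, which yields at once $h|_{U_h}(U_h)=U_{h^{-1}}$ and the isomorphism statement.

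The step I expect to be the main obstacle is the identification $\{J=0\}=\Exc(h)$ inside (1) — precisely the claim that a birational map carries no ramification divisor, so that its whole critical locus is contracted. This is where degree one together with $\fieldchar(\bfk)=0$ (separability and generic smoothness) are essential; once it is in hand, the Euler computation and the Zariski's Main Theorem argument are formal.
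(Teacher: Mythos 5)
Your proof is correct, but part (1) takes a genuinely different route from the paper. For $\Ind(h)\subseteq\Exc(h)$ the paper resolves $h$ as $\pi_2\circ\pi_1^{-1}$ on a minimal surface $X$ and argues that for $p\in\Ind(h)\setminus\Exc(h)$ the map $\pi_2$ would be a local isomorphism along the tree of negative curves $\pi_1^{-1}(p)$, contradicting minimality; you instead work with the homogeneous components $P_i$, use Euler's identity to place $\Ind(h)$ inside the Jacobian curve $\{J=0\}$, and then identify $\{J=0\}$ set-theoretically with $\Exc(h)$ by the classical fact that a birational map has no ramification divisor, so that its whole critical locus is contracted. Both arguments are sound. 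Your key step (a non-contracted component of the critical locus would give a point where $h$ is quasi-finite and birational but not a local isomorphism, which Zariski's Main Theorem forbids) is exactly the obstacle you flag, and it does go through; note, however, that your route genuinely uses $\fieldchar(\bfk)=0$ both for $J\not\equiv 0$ (generic smoothness) and for the separability of $h|_C$, whereas the paper's resolution argument is characteristic-free and would apply verbatim to birational maps between arbitrary smooth projective surfaces. What your approach buys in exchange is an explicit and stronger statement, namely $\Exc(h)=\{J=0\}$ with $\deg J=3(d-1)$, which bounds the exceptional configuration for free. For parts (2) and (3) your argument (quasi-finite birational morphism to a normal variety is an open immersion, applied symmetrically to $h$ and $h^{-1}$) coincides with the paper's.
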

\begin{proof}
There is a smooth projective surface $X$ and two birational morphisms $\pi_1,\pi_2: X\to \P^2$ such that $h=\pi_2\circ\pi_1^{-1}$;
we choose $X$ minimal, in the sense that there is no $(-1)$-curve $C$ of $X$ which is contracted by both $\pi_1$ and~$\pi_2$ (\cite{Hartshorne1977}).

Pick a point $p\in \Ind(h)$. The divisor $\pi_1^{-1}(p)$ is a tree of rational curves of negative self-intersections, 
with at least one $(-1)$-curve. If $p\notin \Exc(h)$, any curve  contracted by $\pi_2$ that intersects $\pi_1^{-1}(p)$ is in fact contained in $\pi_1^{-1}(p)$.
But $\pi_2$ may be decomposed as a succession of contractions of $(-1)$-curves: since it does not contract any $(-1)$-curve in $\pi_1^{-1}(p)$, 
we deduce that $\pi_2$ is a local isomorphism along $\pi_1^{-1}(p)$. Then, we would obtain a $(-1)$-curve in $\P^2_\bfk$, contradicting the minimality of $\P^2_\bfk$; hence $\Ind(h)\subset \Exc(h)$.
Thus  $h|_{U_h}: U_h\to \P^2$ is regular.  Since $U_h\cap \Exc(h)=\emptyset,$ $h|_{U_h}$ is an open immersion,  $h^{-1}$ is well defined on $h|_{U_h}(U_h)$, and $h^{-1}$ is an open immersion on $h|_{U_h}(U_h)$. It follows that $h|_{U_h}(U_h)\subseteq U_{h^{-1}}.$
The same argument shows that $h^{-1}|_{U_{h^{-1}}}: U_{h^{-1}}\to \P^2$ is well defined and its image is in $U_h.$
Since $h^{-1}|_{U_{h^{-1}}}\circ h|_{U_h}=\id$ and $h|_{U_h}\circ h^{-1}|_{U_{h^{-1}}}=\id$; this concludes the proof. 
\end{proof}

\medskip

Let $f_1$ and $f_2$ be dominant endomorphisms of $\P^2_\bfk$. Let $h:\P^2\dashrightarrow \P^2$ be a birational map such that $f_1=h^{-1}\circ f_2\circ h$.
Let $d$ be the common (algebraic) degree of $f_1$ and $f_2$.
Recall that an algebraic subset $C$ of $\P^2_\bfk$ is {\bf{totally invariant}} 
under the action of an endomorphism $g$ if $g^{-1}(C)=C$; moreover, if  $g^{-1}(C)=C$ and $C$ is non-empty, then $g$ must be dominant, $g(C)=C$, and if $\deg(g)\geq 2$, $g$ ramifies along $C$\footnote{
That $g$ be dominant follows from the following fact: 
{\emph{the image of an endomorphism of $\P^2$ is equal to $\P^2$ or to a singleton}}. Indeed, in homogeneous coordinates, $g$ is given by three homogeneous polynomials of the same degree $d\geq 0$ without common factor of positive degree. If $d=0$, $g(\P^2)$ is a singleton. 
If $d\geq 1$, the preimage of a point is finite, by Bezout theorem.}.

\begin{lem}\label{lemtotinv} The exceptional set of $h$ is totally invariant under the action of $f_1$: $f_1^{-1}(\Exc(h))=\Exc(h)$.
\end{lem}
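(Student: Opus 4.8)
The plan is to read the conjugacy relation $h\circ f_1=f_2\circ h$ at the level of curves and their strict transforms. First I would record the two features of the maps that drive the argument: since $f_1$ and $f_2$ are dominant endomorphisms of $\P^2_\bfk$, they are finite and surjective, so they contract no curve and the preimage of a curve is again a nonempty curve. In particular, for an irreducible curve $C$ the image $f_1(C)$ is an irreducible curve, whereas $f_2$ sends a curve to a curve and a point to a point. Note also that $f_1$ and $f_2$ are morphisms, so the only indeterminacy involved in the composites $h\circ f_1$ and $f_2\circ h$ comes from $h$, and $\Ind(h)$ is a finite set.

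The main tool I would extract is the identity of strict transforms
\[
h_\circ(f_1(C))=f_2(h_\circ(C)),
\]
valid for every irreducible curve $C$. This follows from the equality of rational maps $h\circ f_1=f_2\circ h$: the two maps agree on a dense open set, hence on the generic point of $C$, so the Zariski closures of the images of $C$ coincide. Since $C$ and $f_1(C)$ are curves, neither is contained in the finite set $\Ind(h)$, so $h_\circ(C)$ and $h_\circ(f_1(C))$ are genuinely defined (and one may alternatively invoke Proposition~\ref{prouhreg}, which makes $h$ an isomorphism away from the exceptional set).

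With this identity the two inclusions are immediate from the dichotomy ``point versus curve''. For $\Exc(h)\subseteq f_1^{-1}(\Exc(h))$, take an irreducible component $C$ of $\Exc(h)$; then $h_\circ(C)$ is a point, so $h_\circ(f_1(C))=f_2(h_\circ(C))$ is a point, i.e. $f_1(C)\subseteq\Exc(h)$, and therefore $C\subseteq f_1^{-1}(f_1(C))\subseteq f_1^{-1}(\Exc(h))$. For the reverse inclusion $f_1^{-1}(\Exc(h))\subseteq\Exc(h)$, let $D$ be an irreducible curve with $C:=f_1(D)\subseteq\Exc(h)$; then $f_2(h_\circ(D))=h_\circ(f_1(D))=h_\circ(C)$ is a point, and since $f_2$ contracts no curve this forces $h_\circ(D)$ to be a point, i.e. $D\subseteq\Exc(h)$. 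As every irreducible component of $f_1^{-1}(\Exc(h))$ arises as such a $D$, the two inclusions combine to give $f_1^{-1}(\Exc(h))=\Exc(h)$.

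The step I expect to require the most care is the justification of the displayed identity of strict transforms: one must make sure that passing from the equality of rational maps $h\circ f_1=f_2\circ h$ to an equality of images of a \emph{fixed} curve is legitimate across the indeterminacy locus of $h$. The clean way is to argue on generic points, using that $\Ind(h)$ is finite so that neither $C$ nor $f_1(C)$ can be swallowed by it; once this is in place, everything else is a formal consequence of the finiteness of $f_1$ and $f_2$.
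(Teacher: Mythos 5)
Your proof is correct and follows essentially the same route as the paper: you read the conjugacy $h\circ f_1=f_2\circ h$ on strict transforms and use that the finite maps $f_1,f_2$ contract no curve to deduce $f_1^{-1}(\Exc(h))\subseteq\Exc(h)$, which is exactly the paper's key step. The only (harmless) difference is at the end: the paper deduces the reverse inclusion from surjectivity of $f_1$, whereas you obtain it directly by running the same strict-transform identity in the other direction.
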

\begin{proof} Since $h\circ f_1= f_2\circ h$, the strict transform of $f_1^{-1}(\Exc(h))$ by $f_2\circ h$ is a finite set, but every dominant endomorphism of $\P^2_\bfk$ is a finite map, so the strict transform of $f_1^{-1}(\Exc(h))$ by $h$ is already a finite set. This means that $f_1^{-1}(\Exc(h))$ is contained in 
$\Exc(h)$;  this implies $f_1(\Exc(h))\subset \Exc(h)$ and then $f_1^{-1}(\Exc(h))=\Exc(h)=f_1(\Exc(h))$ because $f_1$ is onto.
\end{proof}

\begin{lem}\label{lemconfexc} If $d\geq 2$ then $\Exc(h)$ and $\Exc(h^{-1})$ are two isomorphic configurations of lines, and this
configuration falls in the following list:
\begin{enumerate}
\item[(P0)] the empty set;
\item[(P1)] one line in $\P^2$;
\item[(P2)]  two lines in $\P^2$;
\item[(P3)]  three lines in $\P^2$ in general position.
\end{enumerate}
\end{lem}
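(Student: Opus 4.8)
The plan is to bound $\deg\Exc(h)$ using ramification, and then to rule out curved components and non-generic configurations by two finer inputs: a double-cover trick and a topological Euler characteristic computation. Write $D=\Exc(h)$. Since $f_1$ sends curves to curves and $f_1^{-1}(D)=D$ (Lemma~\ref{lemtotinv}), $f_1$ permutes the irreducible components of $D$, so after replacing $f_1$ by a suitable power (which keeps $D$ totally invariant and keeps the degree $\geq 2$) I may assume $f_1^{-1}(D_i)=D_i$ for every component $D_i$. Counting degrees in $f_1^*D_i=e_iD_i$ through $f_1^*\sO(1)=\sO(d)$ gives $e_i=d$, so $f_1$ ramifies to order $d$ along each $D_i$ and the ramification divisor obeys $R_{f_1}\geq (d-1)D$. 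As $K_{\P^2}=f_1^*K_{\P^2}+R_{f_1}$ forces $\deg R_{f_1}=3(d-1)$, I conclude $\deg D\leq 3$.

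The decisive step is to show that no irreducible component is curved, which I would do with a double cover. Suppose some component $C$ is a smooth conic; by the reduction above $f_1^{-1}(C)=C$. Let $\sigma:Y\to\P^2$ be the double cover branched along $C$: classically $Y\cong\P^1\times\P^1$, the map $\sigma$ is the quotient by the swap involution, and $\sigma^{-1}(C)$ is the diagonal $\Delta$. Writing $C=\{s=0\}$ and $\tau=\sqrt{s/\ell^2}$ for a linear form $\ell$, the relation $f_1^*s=c\,s^d$ makes $f_1^*(\tau^2)$ a square in $\bfk(Y)$, so $f_1$ lifts to a finite endomorphism $\tilde f$ of $Y$ of topological degree $d^2$ with $\tilde f^{-1}(\Delta)=\Delta$. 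But every endomorphism of $\P^1\times\P^1$ has the form $(p,q)\mapsto(\phi(p),\psi(q))$ or $(p,q)\mapsto(\psi(q),\phi(p))$, and $\tilde f^{-1}(\Delta)=\Delta$ forces $\phi=\psi$ and $\phi$ injective, which is absurd since $\deg\phi\geq 2$. Hence $D$ has no smooth conic component; in particular, if $\deg D\leq 2$ then $D$ is empty, one line, or two lines, i.e.\ (P0)--(P2).

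When $\deg D=3$ the degree count is sharp: $R_{f_1}=(d-1)D$, so $f_1$ is \'etale over $U_h=\P^2\setminus D$. Passing to $\C$ by the Lefschetz principle, a finite \'etale self-map of degree $d^2>1$ gives $e(U_h)=d^2\,e(U_h)$, whence $e(U_h)=0$ and $e(D)=3$, where $e$ is the topological Euler characteristic. Since smooth conics are already excluded, the only degree-$3$ candidates are three lines and irreducible cubics, and a direct count gives $e(D)=3$ for three lines in general position, $e(D)=4$ for three concurrent lines, and $e(D)\in\{0,1,2\}$ for irreducible cubics; thus only (P3) survives. Finally I would run the whole argument for $h^{-1}$, which conjugates $f_2$ to $f_1$ and whose exceptional set is totally invariant under $f_2$: this shows $\Exc(h^{-1})$ is likewise a configuration of $k\leq 3$ lines in general position. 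Because $h$ restricts to an isomorphism $U_h\to U_{h^{-1}}$ (Proposition~\ref{prouhreg}), the ring $\sO(U)$ and the group $\sO(U)^\times/\bfk^\times$ agree on both sides, forcing the same number of lines; and $k\leq 3$ lines in general position are unique up to $\PGL_3$, so the two configurations are isomorphic.

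The step I expect to be the main obstacle is excluding curved components: the ramification bound alone permits a smooth conic (or a conic plus a line), and the clean Euler-characteristic identity is available only in the extremal case $\deg D=3$. It is the double-cover argument that removes conics in every degree, and its delicate points are producing the lift $\tilde f$ to $\P^1\times\P^1$ and exploiting the rigid product/swap form of endomorphisms of $\P^1\times\P^1$ together with the total invariance of $\Delta$.
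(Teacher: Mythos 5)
Your proposal is correct, and the overall skeleton matches the paper's: total invariance of $\Exc(h)$ under $f_1$ (Lemma~\ref{lemtotinv}), a classification of totally invariant curves, then the isomorphism $U_h\simeq U_{h^{-1}}$ of Proposition~\ref{prouhreg} plus the pairwise non-isomorphy of the four complements (your invariants $\sO(U)$ and $\sO(U)^\times/\bfk^\times$ make this explicit) to match the two types. The genuine difference is in the middle step: the paper simply cites the classification of totally invariant curves of endomorphisms of $\P^2$ from Forn\ae ss--Sibony and Cerveau--Lins Neto (and, in the subsequent remark, offers Gurjar's theorem that a totally invariant irreducible curve of an endomorphism ramifying along it must be a line), whereas you reprove it from scratch. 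Your ramification count $\deg R_{f_1}=3(d-1)\geq (d-1)\deg D$ after passing to a power of $f_1$ is the standard bound $\deg D\leq 3$; the double-cover lift to $\P^1\times\P^1$ with totally invariant diagonal is a clean, self-contained way to kill conic components in every degree; and the purity-plus-Euler-characteristic computation $e(U_h)=0$, $e(D)=3$ in the extremal case correctly eliminates irreducible cubics ($e\in\{0,1,2\}$) and concurrent triples ($e=4$). What your route buys is independence from the cited literature at the cost of length and of invoking the Lefschetz principle (or $\ell$-adic Euler characteristics) for the topological step; what the paper's route buys is brevity. I see no gap: the lift $\tilde f$ exists as a finite morphism by the universal property of normalization since $f_1^*s=cs^d$ makes $f_1^*(\tau^2)$ a square in $\bfk(Y)$, the rigidity of endomorphisms of $\P^1\times\P^1$ is standard from the action on the nef cone, and additivity/multiplicativity of $e$ for the stratification $\P^2=U_h\sqcup D$ and for finite \'etale covers are valid here.
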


\begin{proof} Assume $\Exc(h)$ is not empty; then, by Lemma \ref{lemtotinv},  the curve $\Exc(h)$ is totally invariant under $f_1$.
According to \cite[\S 4]{Fornaess-Sibony} and~\cite[Proposition 2]{Cerveau-LN}, $\Exc(h)$ is one of the three curves listed in (P1) to (P3).

Changing $h$ into $h^{-1}$ and permuting the role of $f_1$ and $f_2$, we see that $\Exc(h^{-1})$ is also a configuration of type (P$i$) for some $i$.
Proposition \ref{prouhreg} shows that $U_h\simeq U_{h^{-1}}$.
Since the four possibilities (P$i$) correspond to pairwise 
non-isomorphic complements, we deduce that $\Exc(h)$ and $\Exc(h^{-1})$ have the same type. 
\end{proof}

\begin{rem}
One can also refer to \cite{Gurjar2003} to prove this lemma. Indeed,  
$f_1$ induces a map from the set of irreducible components of $\Exc(h)$ into itself, 
and since $f_1$ is onto, this map is a permutation; the same applies to $f_2$. Thus,
replacing $f_1$ and $f_2$ by $f_1^m$ and $f_2^m$ for some suitable $m\geq 1$, we may assume that $f_1(C)=C$ for every irreducible component $C$ of $\Exc(h)$.
Since $f_1$ is finite, $\Exc(h)$ has only finitely many irreducible components, and $f_1(\Exc(h))=\Exc(h)$, we obtain $f_1^{-1}(C)=C$ for every component. Since $f_1$ acts by multiplication by $d$ on $\Pic(\P^2_\bfk)$, the ramification index of $f_1$ along $C$ is $d>1$, and
the main theorem of \cite{Gurjar2003} implies that $C$ is a line. 
\end{rem}

\begin{rem}
{\emph{Totally invariant hypersurfaces of endomorphisms of $\P^3$  are unions of hyperplanes, at most four of them}}. We refer to~\cite{Horing:2017} for a proof and important additional 
references, notably the work of J.-M. Hwang, N. Nakayama and D.-Q. Zhang; see also the recent paper of Y. Mabed on this subject~\cite{Mabed}, and the references therein. So, an analog of Lemma~5 holds
in dimension $3$ too; but our proof in case (P1), see \S~4 below, does not apply in dimension $3$, at least not directly. (Note that~\cite{Briend-Cantat-Shishikura:2004} contains an important gap, 
since its main result is based on an incorrect lemma from~\cite{Briend-Duval:2001}).
\end{rem}

\medskip

{\noindent}{\bf{3. Normal forms. --}}
Two configurations of the same type (P$i$) are equivalent under the action of $\Aut(\P^2_\bfk)=\PGL_3(\bfk)$. 
If we change $h$ into $A\circ h\circ B$ for some well chosen pair of automorphisms $(A,B)$, or equivalently if we change $f_1$ into $B\circ f_1\circ B^{-1}$ and 
$f_2$ into $A^{-1}\circ f_2\circ A$, we may assume that $\Exc(h)=\Exc(h^{-1})$ and that exactly  one of the following situation occurs (see also~\cite{Fornaess-Sibony}):

\smallskip

{{\bf{(P0).--}} {$\Exc(h)=\Exc(h^{-1})=\emptyset$.--}\indent Then $h$ is an automorphism of $\P^2_\bfk$ and Theorem~A is proved. 

\smallskip

{{\bf{(P1).--}} {$\Exc(h)=\Exc(h^{-1})=\{z=0\}$.--}\indent  Then $h$ induces an automorphism of $\bbA^2_\bfk$ and
$f_1$ and $f_2$ restrict to endomorphisms of $\bbA^2_\bfk=\P^2_\bfk\setminus \{z=0\}$ (that extend to endomorphisms of $\P^2_\bfk$).

\smallskip

{{\bf{(P2).--}} {$\Exc(h)=\Exc(h^{-1})=\{x=0\}\cup \{z=0\}$.--}\indent  
Then, 
$U_h$ and $U_{h^{-1}}$ are both equal to the open set $U:=\{(x,y)\in \bbA^2|\,\, x\neq 0\}$. Moreover,
\begin{equation}\label{eq:h-jonq}
h|_U(x,y) = (Ax^{\pm 1}, Bx^{m}y+C(x))
\end{equation}
for some regular function $C(x)$ on $\bbA^1_\bfk\setminus\{ 0\}$ and $m\in \Z$, and 
\begin{equation}\label{eqf_ip2}
f_i|_U(x,y)= (x^{\pm d}, F_i(x,y))
\end{equation} 
for some rational functions $F_i\in \bfk(x)[y]$ which are regular on $(\bbA^1_\bfk\setminus\{Â 0\})\times \bbA^1$
and have degree $d$ (more precisely, $f_i$ must define an endomorphism of $\P^2$ of degree $d$).
Moreover, the signs of the exponent $\pm d$ in Equation~\eqref{eqf_ip2} are the same for $f_1$ and $f_2$.

\smallskip

{{\bf{(P3).--}} {$\Exc(h)=\Exc(h^{-1})=\{x=0\}\cup\{y=0\}\cup \{z=0\}$.--}\indent  
In this case, 
each $f_i$ is equal to $a_i\circ g_d$ where  $g_d( [x:y:z])= [x^d:y^d:z^d]$
and each $a_i$  is an automorphism of $\P^2_\bfk$ acting by permutation of the coordinates, while 
$h$ is an automorphism of $(\bbA^1\setminus \{0\})\times (\bbA^1\setminus \{0\})$.

\medskip

{\noindent}{\bf{4. Endomorphisms of $\bbA^2_\bfk$. --}}
This section proves Theorem~A in case (P1):  

\begin{pro}\label{propolyendo} Let $f_1$ and $f_2$ be endomorphisms  of $\bbA^2$ that extend to endomorphisms of $\P^2$ of degree $d\geq 2$. If $h$ is an automorphism of $\bbA^2$ that conjugates $f_1$ to $f_2$ then  $h$ is an affine automorphism i.e. $\deg h=1$.
\end{pro}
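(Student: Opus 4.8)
The plan is to rule out $\deg h\ge 2$ by following the totally invariant line through the conjugacy at the level of valuations. Assume $\deg h=e\ge 2$. Since $h\in\Aut(\bbA^2)$ we have $\Exc(h)\subseteq\{z=0\}=:L_\infty$, and as $e\ge 2$ this contracted set is nonempty, so $\Exc(h)=L_\infty$; thus $h$ maps $L_\infty\setminus\Ind(h)$ onto the single point $P_+:=\Ind(h^{-1})$, which lies on $L_\infty$ by Proposition~\ref{prouhreg}. By Lemma~\ref{lemtotinv}, $L_\infty$ is totally invariant under $f_1$; since $f_1$ acts by multiplication by $d$ on $\Pic(\P^2)$ and $\deg f_1=d\ge 2$, it ramifies along $L_\infty$ with index $d$. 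Consequently the divisorial valuation $v_\infty:=\mathrm{ord}_{L_\infty}$ is an eigenvaluation of maximal eigenvalue for the natural pull-back action $f^{\bullet}$ on divisorial valuations of $\bfk(\P^2)$, namely $f_1^{\bullet}v_\infty=d\,v_\infty$.

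Next I would transport this eigenvaluation by the conjugacy. Writing $f_1=h^{-1}\circ f_2\circ h$ gives $f_1^{\bullet}=h^{\bullet}\circ f_2^{\bullet}\circ(h^{\bullet})^{-1}$, so $w:=(h^{\bullet})^{-1}v_\infty=h_{\star}v_\infty$ satisfies $f_2^{\bullet}w=d\,w$: it is again an eigenvaluation of maximal eigenvalue $d$, now for $f_2$. The decisive point is to compute its center: because $h$ contracts $L_\infty$ to the point $P_+$, the push-forward $h_{\star}\mathrm{ord}_{L_\infty}$ is a divisorial valuation whose center on $\P^2$ is the \emph{point} $P_+$, not the curve $L_\infty$. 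Heuristically, $h$ should carry the totally invariant curve $L_\infty$ of $f_1$ onto a totally invariant curve of $f_2$, but it crushes it to $P_+$, so the transported eigenvaluation is supported at a point rather than along a genuine curve.

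The crux is then a rigidity statement: for an endomorphism $g$ of $\P^2$ of degree $d\ge 2$, every divisorial eigenvaluation of maximal eigenvalue $d$ is the order of vanishing along a totally invariant curve of $\P^2$, and so has a one-dimensional center. Granting this, $w$ would be centered along a curve, contradicting that its center is the point $P_+$; hence $e=1$ and $h$ is affine. I expect this rigidity statement to be the main obstacle, the more so as it must be established over an arbitrary algebraically closed field of characteristic $0$, without appealing to Green currents. I would prove it by analysing the dynamics of $g$ on the valuative tree at the fixed point $P_+$: there $g$ ramifies along $L_\infty$ to order $d$, so in local coordinates $(w,z)$ with $L_\infty=\{z=0\}$ one has $g(w,z)=(\lambda w+\cdots,\,z^{d}\cdot\text{unit})$, and the relation $g^{\bullet}v_s=c\,v_s$ for the monomial valuations $v_s$ (weight $1$ on $z$, weight $s$ on $w$) forces $c=d$ and then $s(d-1)=0$, identifying $v_\infty$ (the case $s=0$) as the unique monomial eigenvaluation of eigenvalue $d$. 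The contraction of the tree dynamics towards the eigenvaluation of maximal eigenvalue should then upgrade this to uniqueness among all divisorial valuations; making that upgrade precise, and checking that $w$ is indeed captured by the local picture at $P_+$, is the delicate part.
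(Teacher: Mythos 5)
Your reduction is set up correctly: $L_\infty=\{z=0\}$ is totally invariant for $f_1$, $f_1^{\bullet}\mathrm{ord}_{L_\infty}=d\,\mathrm{ord}_{L_\infty}$, and the conjugacy does transport this to a divisorial valuation $w=h_{\star}\mathrm{ord}_{L_\infty}$ with $f_2^{\bullet}w=d\,w$ whose center is the point $P_+$ when $\deg h\geq 2$. This is close in spirit to the paper's argument, which tracks the valuation $-\deg=\mathrm{ord}_{L_\infty}$ inside the fixed locus of $(f_i)_\bullet$ on the valuative tree at infinity. But the rigidity statement on which your contradiction rests is false: it is \emph{not} true that every divisorial valuation satisfying $g^{\bullet}v=d\,v$ for an endomorphism $g$ of degree $d$ is the order of vanishing along a totally invariant curve. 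Take $g=g_d:[x:y:z]\mapsto[x^d:y^d:z^d]$ and the point $[1:0:0]$; in the local coordinates $(u,v)=(y/x,z/x)$ one has $g_d(u,v)=(u^d,v^d)$, so \emph{every} monomial valuation at that point --- in particular the multiplicity valuation $\mathrm{ord}_E$ of the blow-up, and infinitely many other divisorial valuations --- satisfies $g_d^{\bullet}v=d\,v$ while being centered at a point. Your local normal form $(\lambda w+\cdots,\,z^{d}\cdot\mathrm{unit})$, which is what forces $s(d-1)=0$ in your computation, presupposes that $g$ is unramified transversally to $L_\infty$ at $P_+$; since $g|_{L_\infty}$ has degree $d\geq 2$ it necessarily ramifies at some points of $L_\infty$, and $P_+$ (a distinguished, typically totally invariant point) is exactly where one should expect this hypothesis to fail. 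So the "delicate part" you flag is not a technical polish but the place where the argument collapses: the eigenvalue equation together with the location of the center is too coarse to produce a contradiction.

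What is actually needed is finer information about the \emph{entire} fixed locus of $(f_i)_\bullet$ on the valuative tree at infinity. The paper invokes Proposition 5.3 of \cite{Favre2011} (applicable because $\lambda_2(f_i)=\lambda_1(f_i)^2=d^2$): this fixed locus is a point or a closed segment consisting of valuations that are monomial with respect to a single pair of coordinates $(P,Q)$ of $\bbA^2$. Combining this with the elementary fact that $-\deg$ can be monomial only for coordinates $P,Q$ of degree one (Lemma~\ref{lemdegmon}), applied once to $(P,Q)=(x,y)$ for $f_1$ and once to $(h^*x,h^*y)$ for $f_2$, yields $\deg h^*x=\deg h^*y=1$. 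If you want to salvage your route, you would have to replace your rigidity claim by a uniqueness statement of this structural kind --- identifying which valuations can arise as $h_{\star}\mathrm{ord}_{L_\infty}$ for a polynomial automorphism $h$ and showing that, among the fixed valuations of $(f_2)_\bullet$, only $-\deg$ itself qualifies --- which is essentially the content of the paper's proof.
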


We follow the notation from \cite{Favre2011} and denote by 
$V_{\infty}$ the valuative tree of $\bbA^2=\Spec ( \bfk[x,y] )$ at infinity.
If $g$ is an endomorphism of $\bbA^2$, we denote by $g_\bullet$ its action on $V_{\infty}$.
 
Set $V_1=\{v\in V_{\infty} \; ; \; \alpha(v)\geq 0, A(v)\leq 0\}$, where $\alpha$ 
and $A$ are respectively the skewness and thinness function, as defined in page 216 of
\cite{Favre2011}; the set $V_1$  is a closed subtree of $V_{\infty}$. For $v\in V_1$, $v(F)\leq  0$ for every $F\in \bfk[x,y]\setminus \{0\}$. 
Then $V_1$ is invariant under each $({f_i})_{\bullet}$, and if we 
set 
\begin{equation}
\sT_i=\{v \in V_1 \; ; \; ({f_i})_{\bullet}v=v\}
\end{equation} 
then $\sT_2=h_{\bullet}\sT_1$. Since each $f_i$ extends to an endomorphism of $\P^2_\bfk$, the valuation  $-\deg$ is an element of $\sT_1\cap \sT_2$. Also, in the terminology of \cite{Favre2011}, $\lambda_2(f_i)=\lambda_1(f_i)^2=d^2$ and $\deg(f_i^n)=\lambda_1^n=d^n$ 
for all $n\geq 1$ and for $i=1$ and $2$, because $f_1$ and $f_2$ extend to regular endomorphisms of $\P^2_\bfk$ of degree $d$. So
by \cite[Proposition 5.3 (a)]{Favre2011}, $\sT_i$ is a single point or a closed segment.  

\smallskip 

A valuation $v\in V_{\infty}$ is {\bf{monomial}}  of weight $(s,t)$ for the pair of polynomial functions $(P,Q)\in \bfk[x,y]^2$ if 
\begin{enumerate}
\item $P$ and $Q$ generate $\bfk[x,y]$ as a $\bfk$-algebra,
\item if $F$ is any non-zero element of $\bfk[x,y]$ and $F=\sum_{i,j\geq 0}a_{ij}P^iQ^j$ is its decomposition as a polynomial function
of  $P$ and $Q$ then
\begin{equation}
v(F)=-\max\{si+tj \; ; \; a_{i,j}\neq 0\}.
\end{equation}
\end{enumerate}
We say that $v$ is monomial for the basis  $(P,Q)$ of $\bfk[x,y]$, if  $v$ is monomial for $(P,Q)$ and some weight $(s,t)$.
In particular, $-\deg$ is monomial for $(x,y)$, of weight $(1,1).$

\begin{lem}\label{lemprovow}If $v\in V_1$ is monomial for $(P,Q)$ of weight $(s,t)$, then   $s,t\geq 0$, and $\min\{s,t\}=\min\{-v(F) \; ; \; F\in \bfk[x,y]\setminus \bfk\}.$
\end{lem}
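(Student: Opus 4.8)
The plan is to extract the values of $v$ on the two generators directly from the definition of a monomial valuation, deduce the sign of the weights from the membership $v\in V_1$, and then bound $v$ on an arbitrary polynomial from below by its value on a generator. First I would apply the defining formula to $F=P$ (whose expansion is simply $P^1Q^0$) and to $F=Q$, obtaining $v(P)=-s$ and $v(Q)=-t$. Since $P$ and $Q$ generate $\bfk[x,y]$, a $\bfk$-algebra of transcendence degree $2$, neither can be constant; and because $v\in V_1$ satisfies $v(F)\leq 0$ for every nonzero $F\in\bfk[x,y]$, the inequalities $-s\leq 0$ and $-t\leq 0$ follow at once, which proves $s,t\geq 0$.

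For the identity $\min\{s,t\}=\min\{-v(F)\;;\;F\in\bfk[x,y]\setminus\bfk\}$, I would first record that two generators of a ring of transcendence degree $2$ are necessarily algebraically independent, so $\bfk[P,Q]=\bfk[x,y]$ is a genuine polynomial ring in $P,Q$, the decomposition $F=\sum_{i,j\geq 0}a_{ij}P^iQ^j$ is unique, and $-v(F)=\max\{si+tj\;;\;a_{ij}\neq 0\}$ is well defined. Then, for any non-constant $F$, some index $(i,j)\neq(0,0)$ with $i,j\geq 0$ has $a_{ij}\neq 0$, and for that index one has $si+tj\geq \min\{s,t\}(i+j)\geq \min\{s,t\}$, using $s,t\geq 0$ and $i+j\geq 1$. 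Passing to the maximum over the support gives $-v(F)\geq \min\{s,t\}$, hence $\min\{-v(F)\;;\;F\notin\bfk\}\geq \min\{s,t\}$.

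It remains to exhibit equality, which is realized by a coordinate: if $s\leq t$ then $-v(P)=s=\min\{s,t\}$, and if $t\leq s$ then $-v(Q)=t=\min\{s,t\}$, so the infimum is attained and equals $\min\{s,t\}$. I do not expect a serious difficulty in this argument; the only point that demands care is the justification that the monomial expansion in $(P,Q)$ is unique, so that the formula for $v(F)$ is unambiguous, together with keeping the sign convention straight, since it is precisely the inequality $v(F)\leq 0$ valid on $V_1$ that converts $v(P)=-s$, $v(Q)=-t$ into the nonnegativity of the weights.
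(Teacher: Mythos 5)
Your proof is correct and takes essentially the same route as the paper's: read off $s=-v(P)$ and $t=-v(Q)$ from the monomial expansion, use the property $v(F)\leq 0$ on $V_1$ to get $s,t\geq 0$, and establish $-v(F)\geq \min\{s,t\}$ with equality attained at a generator. The only cosmetic difference is that the paper first reduces to the case $(P,Q)=(x,y)$ by transporting $v$ along the automorphism $g=(P,Q)$, whereas you run the same computation directly in the $(P,Q)$-expansion (and, usefully, make explicit why that expansion is unique).
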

\begin{proof}
First, assume that $(P,Q)=(x,y)$. For an element $v$ of $V_1$, $v(F)\leq 0$ for every $F$ in $\bfk[x,y]$, hence $s=-v(x)$ and $t=-v(y)$
are non-negative; and the formula for $\min\{s,t\}$ follows from the inequality $-v(F)\geq \min\{s,t\}$. To get the statement for any pair $(P,Q)$, 
change $v$ into $g^{-1}_\bullet v$ where $g$ is the automorphism defined by $g(x,y)=(P(x,y),Q(x,y))$.
\end{proof}

\begin{lem}\label{lemdegmon}If $-\deg$ is monomial for $(P,Q)$, of weight $(s,t)$, then $s=t=1$ and $P$ and $Q$ are of degree one in $\bfk[x,y].$
\end{lem}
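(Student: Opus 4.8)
The plan is to extract two numerical constraints on the weight $(s,t)$ from the hypothesis that $-\deg$ is monomial for $(P,Q)$, and then to play them off against the fact that $(P,Q)$ is a coordinate system, i.e. generates $\bfk[x,y]$.

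First I would record that $-\deg$ lies in $V_1$: since each $f_i$ extends to an endomorphism of $\P^2$, the valuation $-\deg$ belongs to $\sT_1\subseteq V_1$, as noted above. Hence Lemma~\ref{lemprovow} applies to $v=-\deg$ and yields $s,t\geq 0$ together with
$$\min\{s,t\}=\min\{\,-v(F)\;;\;F\in\bfk[x,y]\setminus\bfk\,\}=\min\{\deg F\;;\;F\text{ non-constant}\}=1,$$
the last equality because a non-constant polynomial has degree at least $1$, attained by linear forms. Thus $\min\{s,t\}=1$; after relabelling I may assume $s=1$ and $t\geq 1$. Next I would apply the monomial formula to $F=P$ and to $F=Q$, whose expansions in $(P,Q)$ are the single monomials $P^1Q^0$ and $P^0Q^1$; this gives $v(P)=-s$ and $v(Q)=-t$, that is $\deg P=s$ and $\deg Q=t$. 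Consequently the two assertions ``$s=t=1$'' and ``$\deg P=\deg Q=1$'' are equivalent, so it suffices to prove $t=1$.

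The heart of the argument is to rule out $t\geq 2$. Because $P$ and $Q$ generate $\bfk[x,y]$, I can write $x=\sum_{i,j}a_{ij}P^iQ^j$ and $y=\sum_{i,j}b_{ij}P^iQ^j$. Since $\deg x=\deg y=1$, the monomial formula forces $\max\{si+tj\;;\;a_{ij}\neq0\}=1$ and likewise for the $b_{ij}$; in particular every monomial actually occurring must satisfy $si+tj\leq 1$. With $s=1$ and $t\geq 2$, a monomial $P^iQ^j$ obeys $i+tj\leq 1$ only for $(i,j)\in\{(0,0),(1,0)\}$, for the term $(0,1)$ already contributes $t\geq 2$. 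Thus, if $t\geq 2$, both $x$ and $y$ are affine functions of $P$ alone, so $x,y\in\bfk[P]$; this is impossible, since $\bfk[x,y]$ has transcendence degree $2$ over $\bfk$ while $\bfk[P]$ has transcendence degree $1$. Hence $t=1$, and therefore $s=t=1$ and $\deg P=\deg Q=1$.

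The main obstacle to anticipate is this final step: one must turn the constraint ``$\max\{si+tj\}=1$ for the expansions of both coordinates'' into a genuine contradiction. The clean route is the transcendence-degree argument above rather than a direct manipulation of the coefficient families $a_{ij},b_{ij}$; everything before it is bookkeeping built on Lemma~\ref{lemprovow} and the definition of a monomial valuation.
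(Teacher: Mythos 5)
Your proof is correct, and it takes a cleaner route than the paper's at the key step. Both arguments start identically: Lemma~\ref{lemprovow} applied to $-\deg$ gives $\min\{s,t\}=1$, and (as you note) applying the monomial formula to $F=P$ and $F=Q$ gives $\deg P=s$, $\deg Q=t$, so everything reduces to showing $t=1$. The paper then normalizes $P=x$ by an affine change of variables and invokes the structure of automorphisms of $\bbA^2$ preserving $x$ --- namely that $\bfk[x,Q]=\bfk[x,y]$ forces $Q=ay+C(x)$ with $a\in\bfk^*$ --- before applying the monomial formula to $y=a^{-1}(Q-C(x))$ to get $1=\max\{t,\deg C\}=t$. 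You instead expand $x$ and $y$ themselves in the basis $(P,Q)$ and observe that if $t\geq 2$ the constraint $si+tj\leq 1$ kills every monomial involving $Q$, so $x,y\in\bfk[P]$, contradicting algebraic independence; since $t=\deg Q$ is a positive integer this rules out $t>1$. Your version avoids both the normalization and the (standard but unproved) structural fact about triangular automorphisms, at the cost of nothing; the paper's version gives slightly more explicit information about $Q$ along the way. One small point worth making explicit: the uniqueness of the expansion $F=\sum a_{ij}P^iQ^j$ (equivalently, the algebraic independence of $P$ and $Q$) follows from the surjection $\bfk[U,V]\to\bfk[x,y]$ having trivial kernel for dimension reasons, which is also what makes the evaluation $v(P)=-s$, $v(Q)=-t$ legitimate.
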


\begin{proof}By Lemma \ref{lemprovow}, we may assume that $1=s\leq t$; thus, 
after an affine change of variables, we may assume that $P=x.$ Since $\bfk[x,y]$ is generated by $x$ and $Q$, $Q$ takes form 
$Q=ay+C(x)$ where $a\in \bfk^*$ and $C\in \bfk[x]$. 
If $C$ is a constant, we conclude the proof. Now we assume $\deg (C)\geq 1$.
Then $t=\deg(Q)=\deg(C)$. Since $y=a^{-1}(Q-C(x))$ and $-\deg$ is monomial for $(x,Q)$ of weight $(1,t)$, we get $1=\deg(y)=\max\{t, \deg C\}=t$. It follows that $t=\deg Q=1,$ which concludes the proof.
\end{proof}

\proof[Proof of Proposition \ref{propolyendo}]
By \cite[Proposition 5.3 (b), (d)]{Favre2011}, there exist $P$ and $Q\in \bfk[x,y]$ such that 
for every $v\in \sT_1$, $v$ is monomial for $(P,Q)$. Moreover, $-\deg$ is in $\sT_1\cap \sT_2$. By Lemma~\ref{lemdegmon},  $P=x$ and $Q=y$ after an affine change of coordinates.
Since $\sT_2=h_{\bullet} \sT_1$, for every $v\in \sT_2$, $v$ is monomial for $(h^*x,h^*y).$ Since $-\deg\in \sT_2$,  Lemma \ref{lemdegmon} implies $\deg h^*x=\deg h^*y=1$ and this concludes the proof.
\endproof

\medskip

{\noindent}{\bf{5. Endomorphisms of $(\bbA^1_\bfk\setminus \{0 \})\times \bbA^1_\bfk$. --}}
We now arrive at case (P2), namely $\Exc(h)=\Exc(h^{-1})=\{x=0\}\cup \{z=0\}$, and keep the notation from Section~4.
Our first goal is to prove the following lemma.

\begin{lem}\label{lem-mixed}
If $h$ is not an automorphism of $\P^2$,
then after  conjugacy of $f_1$ and $f_2$  by affine transformations of the 
plane, we are in one of the following cases:

\begin{enumerate}
\item $f_1$ and $f_2$ are equal to $(x^d,y^d)$ and $h(x,y)=(Ax^{\pm 1}, Bx^my)$ with $A$ and $B$  
roots of unity of order dividing $d-1$ and $m\in \Z\setminus\{0\}$.

\item Up to a permutation of $f_1$ and $f_2$, 
$$
f_1(x,y) =(x^d, y^d +\sum_{j=2}^d a_j y^{d-j}) \;  {\text{ and }} \; f_2(x,y) =(x^d, y^d +\sum_{j=2}^d a_j (B/A)^j x^{j} y^{d-j})
$$
with $a_j\in \bfk$,
 and $h(x,y)=(Ax, Bxy)$ with $A$ and $B$ two 
roots of unity of order dividing $d-1$; then $h'[x:y:z]=[Az/B:y:x]$ is an automorphism of $\P^2$
that conjugates $f_1$ to $f_2$.

\item Up to a permutation of $f_1$ and $f_2$, 
$$
f_1(x,y) =(x^d, y^d +\sum_{j=2}^d a_j x^{j} y^{d-j}) \;  {\text{ and }} \; f_2(x,y) =(x^d, y^d +\sum_{j=1}^d a_j (A/B)^j x^{j} y^{d-j})
$$
with $a_j\in \bfk$ and $h(x,y)=(Ax^{-1}, Bx^{-2}y)$ with $A$ and $B$ two 
roots of unity of order dividing $d-1$; then $h'[x:y:z]=[(A/B) x: y:z]$ is an automorphism of $\P^2$
that conjugates $f_1$ to $f_2$. 

\item Up to a permutation of $f_1$ and $f_2$, 
$$
f_1(x,y) =(x^d, y^d +\sum_{j=2}^d a_j y^{d-j}) \;  {\text{ and }} \; f_2(x,y) =(x^d, y^d +\sum_{j=1}^d B^j a_j  y^{d-j})
$$
with $c_j\in \bfk$ and $h(x,y)=(Ax^{-1}, By)$ with $A$ and $B$ two 
roots of unity of order dividing $d-1$; then $h'[x:y:z]=[x: y/B:z]$ is an automorphism of $\P^2$
that conjugates $f_1$ to $f_2$. 

\item 
$f_1$ and $f_2$ are equal to $(x^{-d},x^{-d}y^d)$ and $h(x,y)=(Ax, Bx^my)$ with $A$ is a root of unity of order dividing $d+1$ and $B^{d-1}=A^{-1}$ and $m\in \Z\setminus\{0\}$.
\end{enumerate}
\end{lem}

Note, moreover, that the endomorphisms $f_1$ and $f_2$ in (3) have the same form as $f_2$ in (2), hence are conjugate by linear projective automorphisms of $\P^2$ to endomorphisms of the same form as $f_1$ in (2) (these linear projective maps induce automorphisms of $(\bbA^1_\bfk\setminus \{0 \})\times \bbA^1_\bfk$). Thus, this lemma proves Theorem~A in case (P2).
When $f_1=f_2$, case (2) does not appear, case (3) implies that $h(x,y)=(Ax^{-1}, Bx^{-2}y)$ with $A^{d-1}=1=B^{d-1}$ plus the additional constraint that $(A/B)^j=1$ when $a_j\neq 0$, and case (4) implies $B^j=1$ if $a_j\neq 0$.

\begin{proof} We split the proof in two steps.

\smallskip

{\bf{Step 1.--}} 
We first  assume that $f_i|_U(x,y)= (x^{d}, F_i(x,y))$, with $d>0$.

Since $f_i$ extends to a degree $d$ endomorphism of $\P^2_\bfk$, we can write $F_1(x,y)=a_0y^d+\sum_{j=1}^da_j(x)y^{d-j}$ where $a_0\in \bfk^*$  and the $a_j\in \bfk[x]$
satisfy $\deg(a_j)\leq j$ for all $j$.
Changing the coordinates to $(x,by)$ with $b^d=a_0$, we assume $a_0=1$. We can also conjugate $f_1$ by the automorphism 
\begin{equation}
(x,y)\mapsto \left( x,y+\frac{1}{d} a_1(x) \right)
\end{equation}
and assume $a_1=0$. Altogether, the change of coordinates $(x,y)\mapsto (x, by+\frac{1}{d} a_1(x))$ is affine because 
$\deg(a_1)\leq 1$, and conjugates $f_1$ to an endomorphism $(x^{d}, F_1(x,y))$ normalized 
by $F_1(x,y)=y^d + \sum_{j=2}^d a_j(x) y^{d-j}$ with $\deg(a_j)\leq j$. 
Similarly, we may assume
that $F_2(x,y)=y^d+\sum_{j=2}^db_j(x)y^{d-j}$ for some polynomial functions $b_j$ with $\deg(b_j)\leq j$ for all $j$.

Now, with the notation used in Equation~\eqref{eq:h-jonq},
$h(x,y) = (Ax^{\epsilon}, Bx^{m}y+C(x))$, with $\epsilon =\pm 1$, and 
the two terms of the conjugacy relation $h\circ f_1=f_2\circ h$ are
\begin{align}
h\circ f_1 &=(Ax^{\epsilon d}, \; Bx^{dm}(y^d+\sum_{j=2}^da_j(x)y^{d-j})+C(x^d) )\label{eq:conj2}\\
f_2\circ h &=(A^d x^{\epsilon d}, \; (Bx^my+C(x))^d+\sum_{j=2}^db_j(Ax^{\epsilon}) (Bx^my+C(x))^{d-j} ) \label{eq:conj1}.
\end{align}
This gives $A^{d-1}=1$ and comparing the terms of degree $d$ in $y$ we get $B^{d-1}=1$. 
Then, looking at the term of degree $d-1$ in $y$, 
we obtain $C(x)=0.$
Thus $h(x,y)=(Ax^{\epsilon}, Bx^m y)$ for some roots of unity $A$ and $B$, the orders of which divide $d-1$. 
Since $h$ is not an automorphism, we have 
\begin{equation}\label{eq:mneq0}
m \neq 0 \;{\text{ if }}\; \epsilon =1, {\text{ and }}\; m\neq -1\; {\text{ if }}\; \epsilon=-1.
\end{equation}
Coming back to~\eqref{eq:conj2} and~\eqref{eq:conj1}, we obtain the sequence of equalities 
\begin{equation}\label{eq:bjaj}
	b_j(Ax^{\epsilon}) = a_j(x) (Bx^m)^j 
\end{equation}
for all indices $j$ between $2$ and $d$. On the other hand, $a_j$ and $b_j$ are elements of $\bfk[x]$ of degree at most $j$.

\smallskip

\noindent{\bf{Step 1.a.--}} We first treat the case $\epsilon=1$, i.e.\  $h(x,y)=(Ax, Bx^m y)$; then $m\neq 0$.
There are only three possibilities.
\begin{enumerate}
\item[(a)] All $a_j$ and $b_j$ are equal to $0$; then  $f_1(x,y)=f_2(x,y)=(x^d, y^d)$, which concludes the proof.
\item[(b)] Some $a_j$ is different from $0$ and $m\geq 1$. Then, $m=1$, all coefficients $a_j$ are constant, and $b_j(x)=a_j \left(\frac{Bx}{A}\right)^j$
for all indices $j=2, \ldots, d$.
\item[(c)] Some $a_j$ is different from $0$ and $m\leq -1$. Then, $m=-1$, all coefficients $b_j$ are constant, and $a_j(x)=b_j \left({x/B}\right)^j$
for all indices $j=2, \ldots, d$.
\end{enumerate}
Note that (b) and (c) are equivalent after permutating  $f_1$ and $f_2$ (or changing $h$ into $h^{-1}(x,y)=(x/A,y/(ABx))$).

In case (b), we set $\alpha=B/A$ (a root of unity of order dividing $d-1$),  and use homogeneous coordinates to write
\begin{align}
f_1[x:y:z] &=[x^d : y^d +\sum_{j=2}^d a_j z^{j} y^{d-j}: z^d] \label{eqpps}\\ 
f_2[x:y:z] &=[x^d : y^d +\sum_{j=2}^d a_j \alpha^j x^{j} y^{d-j}: z^d] \label{eqpp}.
\end{align}
The conjugacy $h[x:y:z]=[Axz : Bxy : z^2]$ is not a linear projective automorphism of $\P^2$, but
the automorphism defined by 
$[x:y:z]\mapsto \left[ z/\alpha:y:x \right]$ conjugates $f_1$ to $f_2$. A similar computation holds in case (c) (permuting the role of $f_1$ and $f_2$). 

\smallskip

\noindent{\bf{Step 1.b.--}} 
Then we treat the case $\epsilon=-1$, i.e.\ $h(x,y)=(Ax^{-1}, Bx^m y)$. We obtain the following possibilities.
\begin{enumerate}
\item[(a')] All $a_j$ and $b_j$ are equal to $0$; then  $f_1(x,y)=f_2(x,y)=(x^d, y^d)$. 
\item[(b')] $m\geq 0$ and some $a_j$ is not $0$. Then each $b_j$ is a constant, $m=0$, and $B^j a_j=b_j$ for every $j\geq 2$.  In this case, the linear map $(x,y)\mapsto (x, y/B)$ conjugates $f_1$ to $f_2$: $\ell \circ f_1=f_2\circ \ell$.
\item[(c')] $m\leq -1$ and some $a_j$ is not $0$. Comparing the degrees in the two sides of Equation~\eqref{eq:bjaj}, we get $m=-1,-2.$ Thus, from Equation~\eqref{eq:mneq0}, we get $m=-2.$ 
\end{enumerate}
It remains to find a linear projective conjugacy in case (c'). By~\eqref{eq:bjaj} again, there are $c_j\in \bfk, j=2,\dots, d$ such that $a_j(x)=c_jx^j$ and $b_j(x)=c_j(A/B)^jx^j.$
We set $\beta=A/B$ (a root of unity of order dividing $d-1$),  and  write
\begin{align}
	f_1(x,y) &=(x^d : y^d +\sum_{j=2}^d c_j x^{j} y^{d-j})\\ 
	f_2(x,y) &=(x^d : y^d +\sum_{j=2}^d c_j \beta^j x^{j} y^{d-j}).
\end{align}
The initial conjugacy $h[x:y:z]=[Axz : Byz : x^2]$ is not a linear projective automorphism of $\P^2$, but
the automorphism defined by 
$[x:y:z]\mapsto [\beta x: y:z]$ conjugates $f_1$ to $f_2$. As $f_2$ has the same form as in Equation~\eqref{eqpp}, Step (1.a) shows that $f_1$ and $f_2$ are conjugate, some linear automorphism of $\P^2$, to an endomorphism of type~\eqref{eqpps}. Doing so, the conjugacy $h(x,y)=(A/x, By/x^2)$ becomes of type $(A/x, By)$ as in case (b'). 

This concludes the proof in the setting of Step 1.

\smallskip

{\bf{Step 2.--}} The remaining case is when $f_i=(x^{-d}, F_i(x,y))$, for $i=1,2$, with 
\begin{equation}
F_1(x,y)=\sum_{j=0}^d a_j(x)x^{-d} y^{d-j} \; \text{ and } \;  F_2(x,y)=\sum_{j=0}^d b_j(x)x^{-d} y^{d-j}
\end{equation}
for some polynomial functions $a_j, b_j\in \bfk[x]$ satisfying $\deg(a_j)\leq j$, $\deg(b_j)\leq j$, and $a_0b_0\neq 0$.

We first assume that after a conjugacy by an affine transformation of the 
plane, $f_1^2(x,y)=(x^{d^2}, y^{d^2}).$ By the first step, we may assume that $f_2^2(x,y)=(x^{d^2}, y^{d^2})$ after a similar conjugacy. Then, 
$h(x,y)=(Ax, Bx^my)$ with $A$ and $B$ two 
roots of unity of order dividing $d^2-1$ and $m\in \Z\setminus\{0\}$.
The three lines $L_1=\{x=0\}$, $L_2=\{y=0\}$, and $L_3=\{z=0\}$ are totally invariant by both $f_1^2$ and $f_2^2$, and are the only totally invariant irreducible curves. Then for $i=1,2$, $f_i^{-1}$ permutes these three lines and 
$L_1\cup L_3 \cup L_2$ is totally invariant by $f_i$. Hence, after conjugacy by  affine transformations of the 
plane, we may assume that $f_1=f_2=(x^{-d}, x^{-d}y^d)$ (or $(x^d, y^d)$, as in Assertion~(1)).  Writing the conjugacy equation $h\circ f_1=f_2\circ h$, we see that $h$ is in fact linear, $A$ is a root of unity of order dividing $d+1$  and $B^{d-1}=A^{-1}$.

Now we can assume that $f_1^2$ is not conjugate to $(x^{d^2}, y^{d^2})$ by any affine transformation of the 
plane. We apply Step~1 to $f_1^2, f_2^2$ and $h$. Thus, after a conjugacy of each $f_i$ by an affine transformation of the 
plane of the form $(x,y)\mapsto \left( x,y+c \right)$,
we may assume that, up to a permutation of $f_1$ and $f_2$, $h$ takes form $(Ax, Bxy)$, or $(A/x, By)$, or $(Ax^{-1}, Bx^{-2}y)$.
Writing the conjugacy equation $h\circ f_1=f_2\circ h$ and looking at the term of degree $d$ in $y$, we get a contradiction. This concludes the proof.
%
%
%
\end{proof}

\medskip

{\noindent}{\bf{6. Endomorphisms of $(\bbA^1_\bfk\setminus \{0 \})^2$. --}} Denote by $[x:y:z]$ the homogeneous coordinates of $\P^2_\bfk$
and by $(x,y)$ the coordinates of the open subset $V:=(\bbA^1_\bfk\setminus \{0 \})^2$ defined by $xy\neq 0$, $z=1$. 
We write $f_i=a_i\circ g_d$ 
as in case (P3) of Section~{\bf{3}}. 
Since $h$ is an automorphism of $(\bbA^1_\bfk\setminus \{0 \})^2$, it is the composition $t_h\circ m_h$ of a 
diagonal map $t_h(x,y)=(ux,vy)$, for some pair  $(u,v)\in (\bfk^*)^2$, and  a monomial map 
$m_h(x,y)=(x^ay^b, x^cy^d)$, for some matrix
\begin{equation}
M_h:=\left(\begin{array}{lr}a & b \\Â c & d \end{array}\right) \in \GL_2(\Z).
\end{equation}
Also, note that the group $\mathfrak{S}_3\subset \Bir(\P^2_\bfk)$ of permutations of the coordinates $[x:y:z]$ corresponds to a finite 
subgroup $S_3$ of $\GL_2(\Z)$. 

Since $m_h$ commutes with $g_d$ and $g_d \circ t_h=t_h^d\circ g_d$,  the conjugacy equation is equivalent to 
\begin{equation}\label{eq:conj-eq-P3}
t_h\circ (m_h\circ a_1\circ m_h^{-1})\circ (g_d\circ m_h)= a_2\circ t_h^d\circ (g_d\circ m_h).
\end{equation}
The automorphisms $a_1$ and $a_2$ are monomial maps, induced by elements $A_1$ and $A_2$ of $S_3$, 
and Equation~\eqref{eq:conj-eq-P3} implies that {\sl{$M_h$ conjugates $A_1$ to $A_2$ in $\GL_2(\Z)$}}; indeed, 
the matrices can be recovered by looking at the action on the set of units $wx^m y^n$ in $\bfk(V)$ (or on the fundamental group 
$\pi_1(V(\C))$ if $\bfk=\C$). There are two possibilities : 
\begin{itemize}
\item[(a)] either $A_1=A_2=\mathrm{Id}$, there is no constraint on $m_h$;
\item[(b)] or $A_1$ and $A_2$ are non-trivial permutations, they are conjugate by an element $P\in S_3$,  and 
$M_h=\pm A_2^j \circ P$, for some $j\in \Z$. 
\end{itemize}
In both cases, $u$ and $v$ are roots of unity (their orders are determined by $d$ and the $A_i$).
Let $p$ be the monomial transformation associated to $P$; it is a permutation of the coordinates, 
hence an element of $\Aut(\P^2_\bfk)$. Then, $h'(x,y)=t_h\circ p$ is an element of $\Aut(\P^2_\bfk)$ 
that conjugates $f_1$ to $f_2$. 
%
%
%

\medskip

{\noindent}{\bf{7. Centralizers. --}}  Let $f$ be an endomorphism of a complex projective variety $X$. 
Suppose there exists an ample line bundle $L$ on $X$ such that $f^*L$ is isomorphic to $L^{\otimes d}$, for some degree $d\geq 2$ (one says $f$ is polarized). Then, the set of repelling periodic points  of $f$ is Zariski dense in $X$
(see~\cite{Briend-Duval:2001}).

If $H$ is any subgroup of $\Bir(X)$, we set ${\sf{Cent}}_H(f)=\{h\in H\; ; \; hf=fh\}$.

Now, suppose $G$ is an algebraic group acting birationally and faithfully on~$X$. That is, there is a rational map 
$\alpha\colon G\times X\dasharrow X$, $(x,x)\mapsto \alpha(g,x)$ such that (i) its domain of definition projects onto $X$ via the second projection $G\times X\to X$,  (ii) $\alpha(g,\alpha(h,x))=\alpha(gh,x)$, (iii) $\alpha(e_G,x)=x$, and (iv) the induced map $g\in G\mapsto  \alpha(g,\cdot)\in \Bir(X)$ is injective. 
Then, we can identify $G$ to a subgroup of $\Bir(X)$. 
\begin{lem}
With the above assumptions,  {\emph{${\sf{Cent}}_G(f)$ is finite}}.
\end{lem}
\begin{proof} ${\sf{Cent}}_G(f)$ is an algebraic subgroup of $G$, so if it were infinite, it would contain a $1$-parameter subgroup $(g_t)$, with $t$ in the additive or the multiplicative group. Taking derivatives, the equation $f\circ g_t=g_t\circ f$ provides a non-zero rational vector field $v\colon x\mapsto v(x)\in T_xX$ such that $f_*v=v$ (i.e. $v(f(x))=Df_xv(x)$). 
If $z$ is a periodic point of $f$ of period $q$ in the open set where $v$ is well defined and not zero, $v(z)$ would be an eigenvector of $Df_z^q$ with eigenvalue $1$; so, $z$ could not be repelling, a contradiction with the density of repelling periodic points. \end{proof}

Now, consider an endomorphism $f$ of $\P^2$ of degree $d\geq 2$, over a field $\bfk$ of characteristic $0$. The previous lemma implies that its centralizer in $\PGL_3(\bfk)$ is finite. To describe its centralizer in $\Bir(\P^2_\bfk)$, it remains to study cases (1) and (2) in Theorem~A. 

From Section~6, if $f[x:y:z]=[x^d:y^d:z^d]$, its centralizer is the composition of any birational monomial map with a diagonal map $[x:y:z]\mapsto [Ax:By:z]$ whose coefficients $A$ and $B$ are roots of unity of order dividing $d-1$. If $f$ is a composition of $[x:y:z]\mapsto [x^d:y^d:z^d]$ by a non-trivial permutaion of the coordinates, and $h$ commutes to $f$, then $h$ commutes to $f^2$ and $f^3$, hence it must be such a composition of a monomial and a diagonal map; but then an easy computation shows that $h$ must in fact be diagonal. 

From Section~5, if $f$ is an element from case (2) of Theorem~A, we can conjugate $f$ to 
\begin{equation}\label{eq:strange_case}
f(x,y)= (x^d, y^d +\sum_{j=2}^d a_j y^{d-j})
\end{equation}
for some constants $a_j$, not all $0$, and  then its centralizer in $\Bir(\P^2_\bfk)$  is the semi-direct product 
\begin{equation}\label{eq:strange_case_centralizer}
\Z/2\Z\ltimes E(f)
\end{equation}
where $\Z/2\Z$ is generated by the involution $h(x,y)=(1/x, y)$ and $E(f)$ is the group of diagonal transformations 
$(x,y)\mapsto (Ax,By)$ with $A^{d-1}=1$, $B^{d-1}=1$, and $B^{j-1}=1$ for all indices $j$ such that $a_j\neq 0$. 
Thus, we get 
\begin{cor}
Let $\bfk$ be an algebraically closed field of characteristic $0$. Let $f$ be an endomorphism of $\P^2_\bfk$ of degree $d\geq 2$. 
\begin{itemize}
\item If $f[x:y:z]=[x^d:y^d:z^d]$, its centralizer is the semi-direct product $
\GL_2(\Z)\ltimes D(d-1)
$
where $D(d-1)\subset\PGL_3(\bfk)$ is the finite group of diagonal transformations $[x:y:z]\mapsto [Ax:By:z]$ such that $A^{d-1}=1=B^{d-1}$.

\item If $f$ is the endomorphism from Equation~\eqref{eq:strange_case}, then its centralizer in $\Bir(\P^2)$ is the finite group described in Equation~\eqref{eq:strange_case_centralizer}.

\item If $f$ is not conjugate to an endomorphism from the two previous items by an element of $\PGL_3(\bfk)$, then its centralizer in $\Bir(\P^2_\bfk)$ is a finite subgroup of $\PGL_3(\bfk)$.
\end{itemize}
\end{cor}

{\noindent}{\bf{8. An example in positive characteristic. --}}  Assume that $q=p^s$ with $s\geq 2$ (resp.\ $s\geq 3$ if $p=2$).
Set $G:=xy^p+(x-1)y$. Then, 
$$f_1(x,y)=(x^q, y^q+G(x,y))$$
defines an endomorphism of $\bbA^2$ that extends to an endomorphism of $\P^2$.

Consider a polynomial $P(x)\in\bfF_q[x]$ such that  $2 \leq \deg(P) \leq \frac{q}{p}-1$.
Observe that $\deg (G) < \deg (G(x,y+P(x))) < q$.
Then $g(x,y)= (x,y-P(x))$
is an automorphism of $\bbA^2_\bfk$ that conjugates $f_1$ to 
\begin{align}f_2(x,y) &:=g\circ f_1\circ g^{-1}(x,y) \notag \\
&=(x^q,y^q+P(x)^q+G(x,y+P(x))-P(x^q))\\
&=(x^q,y^q+G(x,y+P(x))).\notag
\end{align}
Just like $f_1$, $f_2$ is an endomorphism of $\bbA^2$ that extends to a regular endomorphism of~$\P^2$
(here we use the inequality $\deg (G(x,y+P(x))) < q$).

Let us prove that $f_1$ and $f_2$ are not conjugate by any automorphism of $\P^2$.
We assume that there exists $h\in \PGL_3(\overline{\bfF_q})$ such that $h\circ f_1=f_2\circ h$ and seek a contradiction.
Consider the pencils of lines through the point $[0:1:0]$ in $\P^2$; for $a\in \bfF_q$ we denote by 
$L_a$ the line $\{x=az\}$, and by $L_\infty$ the line $\{ z=0\}$. 
Then 
\begin{align}\{L_a \; ; \; a\in \bfF_q\cup \{\infty\}\} & = \{{\text{lines}} \ L \ {\text{such that}}\  f_1^{-1}L=L\} \\
&  =\{{\text{lines}} \ L \ {\text{such that}}\ f_2^{-1}L=L\};
\end{align}                  
in other words, the lines $L_a$ for $a\in \bfF_q\cup\{\infty\}$ are exactly the lines which are totally invariant under
the action of $f_1$ (resp. of $f_2$).
Since $h$ conjugates $f_1$ to $f_2$, it permutes these lines. In particular, $h$ fixes the point $[0:1:0]$, and if 
we identify $L_a\cap \bbA^2$ with $\bbA^1$  using the parametrization $y\mapsto (a,y) $ then 
$h$ maps $L_a$ to another line $L_{a'}$ in an affine way: $h(a,y)=(a',\alpha y +\beta)$.

Since $g$ conjugates $f_1$ to $f_2$ and $g$ fixes each of the lines $L_a$, we know that $f_1|_{L_a}$ is conjugate to $f_2|_{L_a}$
for every  $a\in \bfF_q$; for $a=\infty$, both $f_1|_{L_\infty}$ and $f_2|_{L_\infty}$ are conjugate to $y\mapsto y^q$. Moreover
\begin{itemize}
\item $a=\infty$ is the unique parameter  such that $f_1|_{L_a}$  is conjugate to $y\mapsto y^q$ by an affine map $y \mapsto \alpha y + \beta$;
\item $a=0$ is the unique parameter such that $f_1|_{L_a}$ is conjugate to $y\mapsto y^q-y$ by an affine map;
\item $a=1$  is the unique parameter such that $f_1|_{L_a}$ is conjugate to $y\mapsto y^q+y^p$ by an affine map.
\end{itemize}
And the same properties hold for $f_2$.
As a consequence, we obtain $h(L_{\infty})=L_{\infty}$, $h(L_0)=L_0$ and $h(L_1)=L_1$; this means that there are coefficients  $\alpha \in {\overline{\bfF_q}}^*$ and $\beta, \gamma\in {\overline{\bfF_q}}$ such that 
$h(x,y)= (x, \alpha y+\beta x+\gamma)$.
Writing down the relation $h\circ f_1=f_2\circ h$ we obtain the relation
\begin{align}
\alpha y^q+\alpha G(x,y)+\beta x^q+\gamma = &\;  \alpha^q y^q+\beta^qx^q+\gamma^q\\
&+ G(x, \alpha y+\beta x+\gamma +P(x)).
\end{align}
We note that $1<\deg G(x,y)<\deg G(x, \alpha y+\beta x+\gamma +P(x))<q$.
Comparing the terms of degree $q$, we get $\alpha y^q+\beta x^q=\alpha^q y^q+\beta^qx^q.$
It follows that 
\begin{align}
\alpha G(x,y)+\gamma =  \gamma^q+ G(x, \alpha y+\beta x+\gamma +P(x)).
\end{align}
Then $\deg G(x,y)=\deg G(x, \alpha y+\beta x+\gamma +P(x))$, which is a contradiction.

\bibliographystyle{plain}
\bibliography{dd}

\end{document}